\providecommand{\keywords}[1]{\textbf{\textbf{Keywords}:} #1}
\providecommand{\subjclass}[1]{\textbf{\textbf{Mathematics subject classification(2010)}:} #1}
\theoremstyle{plain}
\newtheorem{thm}{Theorem}[section]
\newtheorem{cor}{Corollary}[section]
\newtheorem{rmk}{Remark}[section]
\newtheorem{lem}{Lemma}[section]
\numberwithin{equation}{section}
\title{Angular changes of complex Fourier coefficients of cusp forms}
\author{Mohammed Amin Amri}
\newcommand{\Addresses}{{
  \bigskip
  \footnotesize

  Mohammed Amin.~Amri, \textsc{ACSA Laboratory, Department of Mathematics,Faculty of Sciences, Mohammed First University,Oujda, Morocco}\par\nopagebreak
  \textit{E-mail address}, Mohammed Amin~Amri: \texttt{amri.amine.mohammed@gmail.com}

}}
\def\re{{\Re e\,}}
\def \im{{\Im m\,}}
\newcommand{\C}{\mathbb C}
\newcommand{\Q}{\mathbb Q}
\newcommand{\N}{\mathbb N}
\newcommand{\R}{\mathbb R}
\newcommand{\W}{\mathcal W}
\begin{document}

\maketitle

\begin{abstract}
In this paper, we investigate the ``angular changes" behavior of some subfamilies of Fourier coefficients of both integral and half-integral weight holomorphic cusp forms, thus one gets information about signs of the real an imaginary parts of these subfamilies. These give an extension of some recent results of Kohnen and his collaborators.
\end{abstract}

\subjclass{11F03, 11F30, 11F37}

\keywords{Sign changes, Fourier coefficients, Cusp forms, Dirichlet series.}

\section{Introduction}
The sign changes problem of the Fourier coefficients of cusp forms has been the focus of much recent study, due to their various number theoretic applications. Coming back to the general scenario, in \cite{murty83} Ram Murty, proved that for an arbitrary cusp form belonging to any congruence subgroup, either the real or the imaginary parts of the subsequence of its Fourier coefficients at primes numbers changes sign infinitely often. After that, there has been more extensive study of the Fourier coefficients of other kinds of automorphic forms.

Among many other results, Knopp, Kohnen and Pribitkin in \cite{knopp03} show that the real and the imaginary parts of Fourier coefficients of cusp forms of positive real weight, with multiplier system, changes sign infinitely often.  Going further in this direction, in \cite{kohnen14} Kohnen and Martin proved that the subsequence of Fourier coefficients supported on prime power indices of an even integral weight normalised Hecke eigenform for the full modular group change sign infinitely often. 

The question about the sign changes of Fourier coefficients of half-integral weight modular forms had been asked by Bruiner and Kohnen \cite{bruin}, and there it was shown that the subsequence $\{a(tn^2)\}_{n\ge1}$ of Fourier coefficients of half-integral weight cusp forms has infinitely many  sign changes when a certain $L$--function has no zeros in the interval $(0,1)$, later, in \cite{kohnen10}, this hypothesis has been removed.   

In this paper firstly, we extend the result \cite[Theorem 2.1]{kohnen14} of Kohnen and Martin for an even integral weight normalised newform of arbitrary level $N$ with Dirichlet character $\chi\pmod N$ (see \thref{thm2} for a precise statement). 
We also, extend the result obtained by Kohnen in \cite[Theorem]{kohnen10} for a half-integral weight cusp forms on $\Gamma_0(4N)$, with not necessarily real Dirichlet character $\chi\pmod{4N}$ contained in the orthogonal complement of the subspace of $S_{k+1/2}(4N,\chi)$ generated by the unary theta functions (see \thref{thm3} for a precise statement). Finally, we generalise \cite[Theorem 2.2]{bruin} of Kohnen and Bruiner, for a half-integral weight Hecke eigenforms on $\Gamma_0(4N)$, with not necessarily real Dirichlet character (see \thref{thm4} for a precise statement).

The proofs of the theorems are following broadly the same lines as the proofs of the corresponding conditional results shown in \cite{kohnen14,kohnen10,bruin}.  The essential ingredients are a reformulation in terms of ``wedge'' of Fekete's extension of Landau's theorem \cite{Fekete}. Deligne's theorem \cite[Theorem 8.2]{Deligne73}, and analytic properties of Hecke $L$-functions attached to cusp forms.

\section{Statements of results}
To set up the notations, let $k,N\in\N$ be integers, we denote by $S_k(N,\chi)$ the space of holomorphic cusp forms of weight $k$ and level $N$, with Dirichlet character $\chi\pmod N$, we denote by $r_{\chi}$, the order of the Dirichlet character $\chi$. We call a ``wedge" the portion of the plane given by 
$$
\mathcal{W}(\theta_1,\theta_2):=\{re^{i\theta}\; : \; r\ge 0,\;\; \theta\in[\theta_1,\theta_2]\},
$$
with $0\le\theta_2-\theta_1<\pi$. Our definition is slightly different from that in \cite{huls16}, since if $r>0$, then escaping from the ``wedge" is as easy as having a zero value. Our first main result is the following.
\begin{thm}\thlabel{thm2}
Let $f\in S_k(N,\chi)$, be a normalized newform of even integral weight $k$ and level $N$, with Dirichlet character $\chi$, and assume that $r_{\chi}$ is odd. Let
$$
f(z)=\sum_{n\ge 1}a(n)e(nz),
$$
be the Fourier expansion of $f$ at $\infty$. Let $j\ge1$ be an integer not divisible by $2$. Then for almost all primes $p$ the sequence $\{a(p^{nj})\}_{n\in\N}$ escape infinitely often from the wedge $\mathcal{W}(\theta_1,\theta_2)$.
\end{thm}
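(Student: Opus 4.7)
The plan is to apply a wedge-formulation of Fekete's extension of Landau's theorem to the local Dirichlet series
\[
F_p(s):=\sum_{n\ge 1}a(p^{nj})\,p^{-ns}.
\]
For a prime $p\nmid N$, let $\alpha_p,\beta_p$ be the Satake parameters of $f$ at $p$, so $\alpha_p+\beta_p=a(p)$ and $\alpha_p\beta_p=\chi(p)p^{k-1}$; by Deligne's theorem, $|\alpha_p|=|\beta_p|=p^{(k-1)/2}$. Using the Hecke recursion $a(p^n)=(\alpha_p^{n+1}-\beta_p^{n+1})/(\alpha_p-\beta_p)$ and a partial-fraction expansion, I obtain the closed form
\[
F_p(s)=\frac{p^{-s}}{\alpha_p-\beta_p}\Biggl(\frac{\alpha_p^{j+1}}{1-\alpha_p^j\,p^{-s}}-\frac{\beta_p^{j+1}}{1-\beta_p^j\,p^{-s}}\Biggr),
\]
whose singularities all lie on the line $\Re(s)=s_0:=j(k-1)/2$. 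A direct inspection shows the real point $s=s_0$ itself is a pole iff one of the normalized parameters $\tilde\alpha_p:=\alpha_p/p^{(k-1)/2}$, $\tilde\beta_p:=\beta_p/p^{(k-1)/2}$ lies in the group $\mu_j$ of $j$-th roots of unity.

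Next I introduce the companion series $G_p(s):=\sum_{n\ge 1}\overline{a(p^{nj})}\,p^{-ns}$, which is the analogue of $F_p$ attached to the conjugate newform $\bar f\in S_k(N,\bar\chi)$ and has the same analytic structure. Suppose, aiming at a contradiction, that $\{a(p^{nj})\}_n$ is eventually contained in $\mathcal{W}(\theta_1,\theta_2)$, and set $\theta_0:=(\theta_1+\theta_2)/2$, $\delta:=(\theta_2-\theta_1)/2<\pi/2$. The auxiliary series
\[
H_p(s):=\tfrac12\bigl(e^{-i\theta_0}F_p(s)+e^{i\theta_0}G_p(s)\bigr)=\sum_{n\ge 1}\Re\bigl(e^{-i\theta_0}a(p^{nj})\bigr)\,p^{-ns}
\]
then has eventually non-negative Dirichlet coefficients, each at least $\cos(\delta)\,|a(p^{nj})|$. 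Together with Deligne's upper bound and the explicit formula $|a(p^{nj})|=p^{ns_0}|\sin((nj+1)\theta_p)/\sin\theta_p|$ for a suitable angle $\theta_p$, this pins the abscissa of convergence of $H_p$ to be exactly $s_0$. Fekete's extension of Landau's theorem then forces $H_p$ to have a singularity at $s=s_0$.

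On the other hand, for every prime $p$ outside the exceptional set
\[
E_j:=\bigl\{p\nmid N\,:\,\tilde\alpha_p\in\mu_j\ \text{or}\ \tilde\beta_p\in\mu_j\bigr\},
\]
both $F_p$ and $G_p$, hence $H_p$, are analytic at $s_0$, a contradiction. Thus for each $p\notin E_j$ the sequence $\{a(p^{nj})\}_n$ escapes the wedge infinitely often, and it remains to show that $E_j$ has natural density zero.

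The density statement is where I expect the main difficulty, and it is precisely where the parity assumptions are used. Writing $\tilde\alpha_p=\zeta\in\mu_j$ gives $\tilde\beta_p=\chi(p)\,\zeta^{-1}$, and hence forces the normalized eigenvalue $\lambda_p:=a(p)/p^{(k-1)/2}$ into the finite set
\[
\mathcal{S}_j:=\{\zeta+\eta\zeta^{-1}\,:\,\zeta\in\mu_j,\ \eta\in\mu_{r_\chi}\}.
\]
Since $j$ and $r_\chi$ are both odd, $-1\notin\mu_j\cup\mu_{r_\chi}$, and a short calculation shows $0\notin\mathcal{S}_j$: the equation $\zeta+\eta\zeta^{-1}=0$ would force $\eta=-\zeta^2$, hence $\zeta^{2r_\chi}=-1$, contradicting $\zeta\in\mu_j$ (which is an odd-order group). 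This observation is crucial in the CM case, where $\lambda_p=0$ on a set of primes of density $\tfrac12$; those primes are then safely kept out of $E_j$. For the remaining primes, a standard equidistribution of the normalized Hecke eigenvalues $\lambda_p$ (Sato--Tate in the non-CM case, and the Hecke character description of the $\lambda_p$ in the CM case) shows that $\lambda_p$ lands in the finite set $\mathcal{S}_j$ only on a density-zero subset, which completes the proof.
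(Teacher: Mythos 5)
Your proposal is correct in outline and shares the paper's skeleton (apply the wedge/Landau--Fekete criterion to the local series $\sum_n a(p^{nj})p^{-ns}$, write it as a rational function in $p^{-s}$ via the Satake parameters, observe that a singularity at the real point of the line of convergence forces $\tilde\alpha_p$ or $\tilde\beta_p$ into $\mu_j$ and hence $a(p)=p^{(k-1)/2}(\zeta+\chi(p)\zeta^{-1})$ with the bracket nonzero because $j$ and $r_\chi$ are odd), but you diverge from the paper at the decisive step of controlling the exceptional set $E_j$. The paper's argument is purely algebraic: since $k$ is even and $\zeta^{-\mu}+\chi(p)\zeta^{\mu}\neq 0$ is a fixed algebraic number, the relation $a(p)=\pm p^{(k-1)/2}(\zeta^{-\mu}+\chi(p)\zeta^{\mu})$ forces $\sqrt{p}\in K_f(\zeta)$, a fixed number field, and $\Q(\sqrt{p_1},\sqrt{p_2},\dots)$ has infinite degree over $\Q$; this shows $E_j$ is actually \emph{finite}, with no analytic input. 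You instead invoke equidistribution of the normalized eigenvalues (Sato--Tate in the non-CM case, Hecke characters in the CM case) to get density zero. That route works --- and your observation that $0\notin\mathcal{S}_j$, so the CM primes with $a(p)=0$ are not exceptional, is exactly the point where the parity hypotheses enter and is correctly verified --- but it calls on the proved Sato--Tate conjecture for newforms with nebentypus, a far deeper input than the problem requires, and it yields the weaker conclusion ``density zero'' where the elementary argument yields ``all but finitely many.'' Two smaller differences: you dispose of the ``series converges everywhere'' alternative by pinning $\sigma_c$ to $s_0$ via the explicit formula $|a(p^{nj})|=p^{ns_0}|\sin((nj+1)\theta_p)/\sin\theta_p|$ (this is fine, but you should say a word about why $\limsup_n|\sin((nj+1)\theta_p)|^{1/n}=1$, and handle the degenerate case $\alpha_p=\beta_p$ separately), whereas the paper simply notes the generating function is rational with non-constant denominator, hence has poles; and your auxiliary series $H_p=\tfrac12(e^{-i\theta_0}F_p+e^{i\theta_0}G_p)$ is a hands-on substitute for the paper's already-packaged wedge version of Fekete's theorem. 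I would recommend swapping your equidistribution step for the $\sqrt{p}\in K_f(\zeta)$ argument: it is shorter, elementary, and strictly stronger.
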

As a consequence of \thref{thm2}, we have the following corollary.

\begin{cor}
For for almost all primes $p$ either $\{\re (a(p^{nj}))\}_{n\in\N}$ or $\{\im (a(p^{nj}))\}_{n\in\N}$ changes sign infinitely often.
\end{cor}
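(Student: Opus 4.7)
The plan is to derive this corollary directly from Theorem \thref{thm2} by specializing the wedge to each of the four closed quadrants of $\C$. Observe first that every closed quadrant is itself a wedge in the sense of the paper: the first is $\W(0,\pi/2)$, the second $\W(\pi/2,\pi)$, the third $\W(\pi,3\pi/2)$, and the fourth $\W(-\pi/2,0)$, each having angular opening $\pi/2<\pi$. This observation is precisely what bridges the geometric statement ``escapes a wedge'' and the classical language of sign changes of real and imaginary parts.

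I would then argue by contraposition. Fix a prime $p$ in the set of primes supplied by Theorem \thref{thm2}, and suppose that neither $\{\re(a(p^{nj}))\}_{n\in\N}$ nor $\{\im(a(p^{nj}))\}_{n\in\N}$ changes sign infinitely often. Then there exists an integer $n_0$ such that for every $n\ge n_0$ both $\re(a(p^{nj}))$ and $\im(a(p^{nj}))$ retain a constant sign, say $\varepsilon_1,\varepsilon_2\in\{+,-\}$ respectively. Consequently, the tail $(a(p^{nj}))_{n\ge n_0}$ is contained in the single closed quadrant $Q_{\varepsilon_1,\varepsilon_2}\subset\C$ prescribed by these signs, which, by the above observation, is a wedge of angular width $\pi/2$. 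This contradicts Theorem \thref{thm2}, which guarantees that $a(p^{nj})\notin Q_{\varepsilon_1,\varepsilon_2}$ for infinitely many $n$.

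There is no genuine obstacle here: the analytic heavy lifting has been absorbed into Theorem \thref{thm2}, and the corollary reduces to the elementary dictionary between ``containment in a closed quadrant'' and ``eventual constancy of the signs of $\re$ and $\im$''. The only minor care required is to note that even when some $a(p^{nj})$ vanish, the absence of infinitely many sign changes still forces the nonzero terms in the tail into a single closed half-plane for $\re$ and a single closed half-plane for $\im$, so the quadrant argument applies verbatim.
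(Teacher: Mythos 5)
Your argument is correct and is precisely the deduction the paper intends: the paper states this corollary as an immediate consequence of \thref{thm2} without further proof, the point being exactly your dictionary between eventual sign-constancy of real and imaginary parts and containment of the tail in a closed quadrant, which is a wedge of opening $\pi/2<\pi$ in the paper's sense. Your closing remark about vanishing terms is also consistent with the paper's convention that $0$ belongs to every wedge (since $r\ge 0$ is allowed), so nothing further is needed.
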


In our next results, we study the ``angular changes" behavior of Fourier coefficients of holomorphic cusp forms of half-integral weight. Before stating our results, we need introduce some notations. Let $N\ge4 $ be divisible by $4$, write $S_{k+1/2}(N,\chi)$ for the space of holomorphic cusp forms of half-integral weight $k+1/2$ and level $N$ with character $\chi\pmod N$. From the work of Shimura \cite{shimura73} we know that $S_{k+1/2}(N,\chi)$ can contain single-variable theta-series for $k=1$, let $U(N,\chi)$ be the subspace generated by unary theta functions. If $k\ge2$ then $U(N,\chi)=0$. But this is often not the case for $k=1$. We put $S^{*}_{k+1/2}(N,\chi):=U^{\perp}(N,\chi)$, the orthogonal complement of $U(N,\chi)$ with respect to the Petersson inner product. We shall prove the following. 

\begin{thm}\thlabel{thm3}
Let $f\in S^{*}_{k+1/2}(N,\chi)$ be a cusp form of half integral weight $k+1/2$, level $N$, with Dirichlet character $\chi$, let
$$
f(z)=\sum_{n\ge1}a(n)e(nz),
$$
be its Fourier expansion at $\infty$. Let $t$ be a square-free natural number, suppose  there is $n_0$  such that $a(tn_0^2)\neq0$. Then the sequence $\{a(tn^2)\}_{n\ge1}$  escape infinitely often from the wedge $\mathcal{W}(\theta_1,\theta_2)$.
\end{thm}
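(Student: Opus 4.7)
The plan is to adapt Kohnen's proof from \cite{kohnen10}, with two modifications: a rotation by the midline of the wedge to reduce the escape-from-wedge question to a sign-change question, and a symmetrization $f\leftrightarrow\bar f$ to cope with the possibly non-real character $\chi$. Arguing by contradiction, suppose $a(tn^2)\in\W(\theta_1,\theta_2)$ for every $n\ge N_0$. Set $\alpha=(\theta_1+\theta_2)/2$ and $\delta:=\cos\bigl((\theta_2-\theta_1)/2\bigr)>0$, where positivity of $\delta$ is guaranteed by $\theta_2-\theta_1<\pi$. Then the real numbers $c_n:=\re\bigl(e^{-i\alpha}a(tn^2)\bigr)$ satisfy $c_n\ge\delta\,|a(tn^2)|\ge 0$ for every $n\ge N_0$, and they are strictly positive whenever $a(tn^2)\ne 0$; in particular $c_{n_0}>0$ after possibly enlarging $N_0$ past the given $n_0$.

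Next I would assemble the Dirichlet series $D(s)=\sum_{n\ge 1}c_n n^{-s}$ and, using the identity $2c_n=e^{-i\alpha}a(tn^2)+e^{i\alpha}\overline{a(tn^2)}$, decompose it as
$$2D(s)=e^{-i\alpha}\sum_{n\ge 1}a(tn^2)\,n^{-s}+e^{i\alpha}\sum_{n\ge 1}\overline{a(tn^2)}\,n^{-s}.$$
By the Shimura correspondence applied to $f\in S^{*}_{k+1/2}(N,\chi)$ and to $\bar f\in S^{*}_{k+1/2}(N,\bar\chi)$, each inner sum is a quotient $L(s,F)/L(s-k+1,\psi)$ in which $F=\mathrm{Sh}_t(f)$ (resp.\ $F=\mathrm{Sh}_t(\bar f)$) is an integral-weight cusp form of weight $2k$ --- genuinely cuspidal because $f\in S^{*}$ --- and $\psi$ is a Dirichlet character built from $\chi$ (resp.\ $\bar\chi$) and the Kronecker symbol of $t$. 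Since Hecke $L$-functions of cusp forms are entire of finite order and Dirichlet $L$-functions are entire (except for a simple pole when $\psi$ is principal) and non-vanishing on $\re(s)=1$, the series $D(s)$ continues meromorphically to $\C$ and is holomorphic on the half-plane $\re(s)\ge k$. The wedge-adapted form of Fekete's extension of Landau's theorem, which will be set up earlier in the paper, then says that, for a Dirichlet series with real and eventually non-negative coefficients, the abscissa of convergence $\sigma_c$ is either $-\infty$ or a real singularity of the meromorphic continuation. Combined with Deligne's bound $|a_F(n)|\ll n^{k-1/2+\varepsilon}$ on the two Shimura lifts --- which gives the required upper control on $c_n$ --- this should push $\sigma_c$ down to $-\infty$, forcing $c_n=0$ for all large $n$, hence $a(tn^2)=0$ for all large $n$, which via the Shimura identity (combined once more with Deligne's bound on $F$) is incompatible with $a(tn_0^2)\ne 0$.

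The main obstacle, exactly as in the original conditional result of Bruinier--Kohnen \cite{bruin}, is ruling out real poles of $D(s)$ in the critical region $k-1<\re(s)<k$ arising from real (Siegel-type) zeros of $L(s-k+1,\psi)$: any such pole would cap $\sigma_c$ at some real number in $(k-1,k)$ rather than allowing it to fall to $-\infty$, and would wreck the Fekete--Landau argument as formulated above. The unconditional removal of this obstruction is precisely the content of \cite{kohnen10}, carried out by a residue analysis at the hypothetical real pole that uses Deligne's estimate on the Shimura lift to derive a contradiction with the eventual non-negativity of the coefficients. I would import Kohnen's residue argument essentially verbatim, executing it in parallel for $f$ and $\bar f$ and carrying the linear phase factors $e^{\pm i\alpha}$ through the estimates; both operations are linear and leave the residue computation inert, so no new analytic input is needed beyond what is already available in \cite{kohnen10}.
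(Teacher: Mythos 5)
Your overall strategy --- rotate by the midline of the wedge to produce a real sequence $c_n=\re(e^{-i\alpha}a(tn^2))\ge\delta\,|a(tn^2)|\ge 0$, feed the resulting Dirichlet series through the Shimura correspondence, apply Landau's theorem, and close with the unconditional argument of \cite{kohnen10} --- is essentially the paper's. Indeed the rotation is exactly the mechanism by which the paper derives its wedge version of the Fekete--Landau theorem (\thref{thm1} and \thref{rmk1}); having packaged that step once and for all, the paper then applies \thref{thm1} directly to the complex series $\sum_{n}a(tn^2)n^{-s}$, so it never needs your symmetrization with $\bar f$ (a valid but superfluous detour).

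One link in your chain, however, is broken. Holomorphy of $D(s)$ on and near $\re(s)= k$ together with Landau's theorem does not ``push $\sigma_c$ down to $-\infty$'', and $\sigma_c=-\infty$ would not ``force $c_n=0$ for all large $n$'': a Dirichlet series with non-negative coefficients converging everywhere (e.g.\ $\sum_n e^{-n}n^{-s}$) need not have vanishing coefficients. All this stage yields --- in every case, whether or not $L(\cdot,\chi_{t,N})$ has real zeros in $(0,1)$ --- is $\sigma_c<k$, hence $\sum_{n\le N}|a(tn^2)|=O(N^{k-\varepsilon})$ for some $\varepsilon>0$. The contradiction must then come from a \emph{lower} bound on the coefficients, and that is where the real content of \cite{kohnen10} lies, not (as you frame it) in disposing of Siegel-type zeros; such zeros merely cap $\sigma_c$ somewhere in $(k-1,k)$, which is still $<k$ and hence causes no harm. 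Concretely, the partial-sum bound pushed through the Shimura relation \eqref{eq:8} gives $\sum_{n\le X}|A_t(n)|=O(X^{k})$, so the Hecke $L$-series of $\mathrm{Sh}_t(f)$ would converge absolutely for $\re(s)>k$; but $\mathrm{Sh}_t(f)\neq 0$ since $a(tn_0^2)\neq0$, and by the Rankin--Selberg asymptotic combined with Deligne's bound its abscissa of absolute convergence equals $k+1/2$ exactly (the Lemma of \cite{kohnen10}). That is the contradiction the paper runs, and it is needed in your ``no real pole'' branch just as much as in the other; as written, your endgame for that branch does not close the proof.
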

Under the hypotheses of \thref{thm3} we have

\begin{cor}
Either $\{\re (a(tn^2))\}_{n\ge1}$ or $\{\im (a(tn^2))\}_{n\ge1}$ changes sign infinitely often.
\end{cor}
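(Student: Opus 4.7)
The plan is to derive this corollary from \thref{thm3} by a short contrapositive argument. Suppose that neither $\{\re(a(tn^2))\}_{n\ge1}$ nor $\{\im(a(tn^2))\}_{n\ge1}$ changes sign infinitely often. Then for each of these two real sequences, one of the sets $\{n : x_n > 0\}$ and $\{n : x_n < 0\}$ must be finite, so there exist an integer $N_0\ge 1$ and signs $\epsilon_1,\epsilon_2\in\{+1,-1\}$ such that $\epsilon_1\re(a(tn^2))\ge 0$ and $\epsilon_2\im(a(tn^2))\ge 0$ for every $n\ge N_0$.

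This pins the tail $\{a(tn^2)\}_{n\ge N_0}$ inside a single closed coordinate quadrant of $\C$, and every such quadrant is of the form $\mathcal{W}(\theta_1,\theta_2)$ with $\theta_2-\theta_1=\pi/2<\pi$ (for instance, the first quadrant is $\mathcal{W}(0,\pi/2)$, the second $\mathcal{W}(\pi/2,\pi)$, and so on, all of which are admissible wedges in the sense introduced before \thref{thm2}). Consequently only finitely many terms of the sequence lie outside this particular wedge, in direct contradiction to \thref{thm3}, which guarantees that $\{a(tn^2)\}_{n\ge 1}$ escapes any such wedge infinitely often. Hence at least one of the real or imaginary parts must change sign infinitely often.

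There is no substantive obstacle in this deduction; the wedge formulation of \thref{thm3} was designed precisely so that sign-change statements of this kind drop out as one-line corollaries. The only subtlety worth flagging is that the non-vanishing assumption $a(tn_0^2)\ne 0$ of \thref{thm3} is invoked implicitly through the phrase \emph{under the hypotheses of} \thref{thm3}; without it the subsequence could be identically zero, every term would lie trivially in every wedge, and neither the theorem nor the corollary would have genuine content.
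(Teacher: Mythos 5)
Your argument is correct and is exactly the deduction the paper intends: the paper states this corollary without proof as an immediate consequence of \thref{thm3}, and your contrapositive step (eventual sign constancy of both real and imaginary parts confines the tail to a closed quadrant, which is a wedge $\mathcal{W}(\theta_1,\theta_2)$ with $\theta_2-\theta_1=\pi/2<\pi$) is the one-line reason. Nothing is missing.
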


\begin{thm}\thlabel{thm4}
Let $f\in S^{*}_{k+1/2}(N,\chi)$ be a Hecke eigenform, of half integral weight $k+1/2$, and level $N$, with Dirichlet character $\chi$. Assume that $r_{\chi^2}$ is odd and let
$$
f(z)=\sum_{n\ge1}a(n)e(nz),
$$
be the Fourier expansion of $f$ at $\infty$. Let $t$ be a square free natural number. Then, for almost all primes $p$, the sequence $\{a(tp^{2\nu})\}_{\nu\in\N}$  escape infinitely often from the wedge $\mathcal{W}(\theta_1,\theta_2)$.
\end{thm}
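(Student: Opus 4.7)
The approach is to adapt the method of \thref{thm2} and \thref{thm3}. I fix a prime $p\nmid 4Nt$ and apply the Hecke relation $T(p^2)f=\lambda(p^2)f$ to the Fourier coefficient at $n=tp^{2\nu}$. For $\nu\ge 1$ the Kronecker-symbol term vanishes, producing the three-term recurrence
$$
a(tp^{2\nu+2})=\lambda(p^2)\,a(tp^{2\nu})-\chi(p)^{2}p^{2k-1}\,a(tp^{2\nu-2}),
$$
while the case $\nu=0$ relates $a(tp^{2})$ to $a(t)$ through a Kronecker-symbol term. Solving the recurrence yields the closed form
$$
D_{p}(s):=\sum_{\nu\ge 0}a(tp^{2\nu})\,p^{-\nu s}=\frac{a(t)\bigl(1-\chi(p)\left(\frac{(-1)^{k}t}{p}\right)p^{k-1-s}\bigr)}{(1-\alpha_{p}p^{-s})(1-\beta_{p}p^{-s})},
$$
with Satake parameters $\alpha_{p}+\beta_{p}=\lambda(p^{2})$ and $\alpha_{p}\beta_{p}=\chi(p)^{2}p^{2k-1}$. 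Since the Shimura lift $\mathrm{Sh}_{t}(f)$ is a holomorphic cusp form of integral weight $2k$, Deligne's theorem yields $|\alpha_{p}|=|\beta_{p}|=p^{k-1/2}$. Viewing $D_{p}(s)$ as a Dirichlet series supported on the powers of $p$, it therefore has abscissa of absolute convergence $k-1/2$ and extends meromorphically with possible singularities only on $\re(s)=k-1/2$; the numerator's zero sits at $|p^{-s}|=p^{1-k}\ne p^{1/2-k}$ and cannot cancel either pole.

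Next I assume for contradiction that a positive-density set of primes $p$ has $a(tp^{2\nu})\in\mathcal{W}(\theta_{1},\theta_{2})$ for all sufficiently large $\nu$. The wedge reformulation of Fekete's extension of Landau's theorem then forces the abscissa of convergence of $D_{p}(s)$ to equal the abscissa of absolute convergence and to be an actual singularity, so this singularity must lie on the real axis at $s=k-1/2$. Hence one of $\alpha_{p},\beta_{p}$ coincides with the positive real $p^{k-1/2}$, and the product relation produces the arithmetic identity
$$
\lambda(p^{2})=p^{k-1/2}\bigl(1+\chi(p)^{2}\bigr).
$$

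The principal obstacle, and the most delicate step, is to show that this identity holds only on a density-zero set of primes. Here the hypothesis that $r_{\chi^{2}}$ is odd enters: $\chi(p)^{2}$ is then an odd-order root of unity and in particular $\chi(p)^{2}\ne -1$, so $1+\chi(p)^{2}\ne 0$ and the identity is a genuine constraint on $\lambda(p^{2})$. Since $\chi^{2}$ takes only finitely many values, it suffices to show that for each prescribed odd-order root of unity $\zeta$ the condition $\lambda(p^{2})/p^{k-1/2}=1+\zeta$ is satisfied on a set of primes of natural density zero. This follows from the Sato--Tate equidistribution theorem applied to the integral-weight Hecke eigenform $\mathrm{Sh}_{t}(f)$: the normalised eigenvalues equidistribute with respect to an absolutely continuous measure on $[-2,2]$, so any specified value is attained on a density-zero set. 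The finite union of such sets remains of density zero, contradicting the positive-density assumption and proving the theorem.
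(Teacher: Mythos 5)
Your argument tracks the paper's proof almost exactly up to the identity $\lambda(p^2)=p^{k-1/2}\bigl(1+\chi(p)^2\bigr)$: the same closed form from Shimura's Corollary 1.8, the same use of Deligne's bound to place the poles of \eqref{eq:12} on $\re(s)=k-1/2$, the same exclusion of the ``entire'' alternative via the non-cancelling numerator, and the same role for the hypothesis that $r_{\chi^2}$ is odd (it guarantees $1+\chi(p)^2\neq0$, so the identity is a genuine constraint). Where you diverge is the final, decisive step. The paper finishes elementarily: since $\lambda_p$ lies in the number field $K_f$ and $\chi^2(p)$ lies in a fixed cyclotomic field, the identity forces $\sqrt{p}=\pm\lambda_p/\bigl(p^{k-1}(1+\chi^2(p))\bigr)$ into a \emph{fixed} finite extension of $\Q$, and since $\Q(\sqrt{p_1},\sqrt{p_2},\dots)$ has infinite degree over $\Q$ only finitely many primes can be exceptional. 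Note that the paper's contradiction hypothesis is ``infinitely many bad primes,'' so its ``almost all'' means ``all but finitely many''; your density formulation already aims at a weaker target.

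Your Sato--Tate substitute has genuine gaps beyond that. First, when $\chi^2$ is non-trivial the eigenvalues $\lambda_p$ of $\mathrm{Sh}_t(f)$ are not real, so ``equidistribution with respect to an absolutely continuous measure on $[-2,2]$'' is not the right statement; one must invoke the nebentypus version of Sato--Tate (equidistribution of the Satake angles after normalising by a square root of $\chi^2(p)$, prime by prime along the finitely many arithmetic progressions determined by $\chi^2$). Second, nothing in the definition of $S^{*}_{k+1/2}(N,\chi)$ prevents $\mathrm{Sh}_t(f)$ from being a CM form for $k\ge2$, and for CM forms the equidistribution statement you quote is false as written; the Hecke--Deuring equidistribution still gives density zero for any fixed nonzero target value, but this case must be treated separately. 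Third, even after these repairs the equidistribution route is a vastly deeper input than needed and only yields that the exceptional set has natural density zero, i.e.\ the theorem for a density-one set of primes, which is strictly weaker than the finiteness the paper's algebraic argument delivers. I recommend replacing the equidistribution step with the $\sqrt{p}\in K_f$ argument used in the proof of \thref{thm2}.
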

Under the hypotheses of \thref{thm4} we obtain the following result.

\begin{cor}
For almost all primes $p$ either $\{\re (a(tp^{2\nu}))\}_{\nu\in\N}$ or $\{\im (a(tp^{2\nu}))\}_{\nu\in\N}$  changes sign infinitely often.
\end{cor}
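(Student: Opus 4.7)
The plan is to adapt the integral-weight argument of \cite{kohnen14} to the half-integral setting via the Shimura correspondence, combined with the wedge reformulation of Fekete's theorem applied to a local Dirichlet series. The starting point is the Shimura--Hecke identity: since $f$ is a Hecke eigenform with $T_{p^2}f=\lambda_p f$ and $t$ is squarefree,
$$L_{t,p}(s) := \sum_{\nu\ge 0}\frac{a(tp^{2\nu})}{p^{\nu s}} = a(t)\cdot\frac{1-\psi_t(p)\chi(p)p^{k-1-s}}{1-\lambda_p p^{-s}+\chi^2(p)p^{2k-1-2s}},$$
where $\psi_t(n)=\left(\frac{(-1)^k t}{n}\right)$. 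One may assume $a(t)\ne 0$; otherwise the sequence is identically zero. Factor the denominator as $(1-\alpha_p p^{-s})(1-\beta_p p^{-s})$; by Deligne's theorem applied to the Shimura lift $F$ of $f$ (a cusp form of integral weight $2k$ with nebentypus $\chi^2$ and the same $\lambda_p$), $|\alpha_p|=|\beta_p|=p^{k-1/2}$ and $\alpha_p\beta_p=\chi^2(p)p^{2k-1}$.

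I would then argue by contradiction. Suppose $\{a(tp^{2\nu})\}_\nu$ eventually lies in a wedge $\mathcal{W}(\theta_1,\theta_2)$; deleting finitely many terms alters neither the abscissa of convergence nor the pole structure of $L_{t,p}$, so the wedge version of Fekete's theorem forces the abscissa of convergence $\sigma_c=k-1/2$ (governed by the coefficient growth $p^{(k-1/2)\nu}$) to be a singular point. However, $L_{t,p}(s)$ is rational in $p^{-s}$, hence meromorphic on $\mathbb{C}$ with poles only at $s$ satisfying $p^{-s}\in\{\alpha_p^{-1},\beta_p^{-1}\}$. The point $s=k-1/2$ is such a pole only if $\alpha_p=p^{k-1/2}$ or $\beta_p=p^{k-1/2}$; combined with $\alpha_p\beta_p=\chi^2(p)p^{2k-1}$ this collapses to $\lambda_p=p^{k-1/2}\bigl(1+\chi^2(p)\bigr)$, and the numerator $1-\psi_t(p)\chi(p)p^{-1/2}$ does not cancel this pole since its second term has modulus $p^{-1/2}<1$.

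The remaining, and main, obstacle is to show that the exceptional set $\{p : \lambda_p=p^{k-1/2}(1+\chi^2(p))\}$ has natural density zero. At each such prime the local Euler factor of $L(F,s)$ coincides with that of the Eisenstein-type product $\zeta(s-(k-1/2))\,L(\chi^2,s-(k-1/2))$; a Rankin--Selberg comparison, using the simple pole of $L(F\otimes\overline{F},s)$ at $s=2k$ against the analytic structure dictated by the factorized form, contradicts cuspidality of $F$ unless this set is density-zero. The hypothesis that $r_{\chi^2}$ is odd excludes the degeneracy $\chi^2(p)=-1$ (which would force $\lambda_p=0$ and require a separate, easier treatment). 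Combining the steps, for almost all primes $p$ the sequence $\{a(tp^{2\nu})\}_\nu$ must escape the wedge infinitely often.
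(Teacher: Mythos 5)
Your setup matches the paper's: the Shimura--Hecke local identity, the factorization of the denominator, Deligne's bound $|\alpha_p|=|\beta_p|=p^{k-1/2}$ via the integral-weight lift, the wedge form of Fekete's theorem forcing a real singularity at the abscissa of convergence, the observation that the numerator cannot cancel the relevant pole, and the reduction to the exceptional condition $\lambda_p=p^{k-1/2}\bigl(1+\chi^2(p)\bigr)$ with $1+\chi^2(p)\neq 0$ guaranteed by $r_{\chi^2}$ odd. Up to that point the argument is sound and is essentially the paper's.

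The gap is in what you call the ``main obstacle.'' Your proposed Rankin--Selberg comparison is asserted, not proved, and it is not an off-the-shelf result in the form you need: the condition is that $\alpha_p$ equals $p^{k-1/2}$ exactly (so $\beta_p=\chi^2(p)p^{k-1/2}$), which for non-real nebentypus is not the classical ``extremal eigenvalue'' locus handled by standard Rankin--Selberg or Sato--Tate density statements, and even in the real case turning the local coincidence of Euler factors into a density-zero conclusion requires a genuine argument you have not supplied. More importantly, you have missed the much simpler route the paper takes, which also gives a stronger conclusion (finiteness, not just density zero): since $p^{k-1/2}=p^{k-1}\sqrt{p}$, the relation $\lambda_p=\pm p^{k-1/2}\bigl(1+\chi^2(p)\bigr)$ with $1+\chi^2(p)\neq 0$ forces $\sqrt{p}$ to lie in the fixed number field generated by the Hecke eigenvalues of $f$ and the values of $\chi^2$; as $\Q(\sqrt{p_1},\sqrt{p_2},\dots)$ has infinite degree over $\Q$, only finitely many primes can be exceptional. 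Replacing your Rankin--Selberg paragraph by this algebraic finiteness argument closes the gap. (A minor further point: for the corollary as stated you should also say a word about why escaping every wedge infinitely often yields a sign change of the real or imaginary part --- if neither changed sign infinitely often the sequence would eventually lie in a closed quadrant, which is a wedge --- and note that the degenerate case $a(t)=0$, which you flag, actually makes the sign-change statement vacuous rather than true.)
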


\section{Proofs}
We begin with the following crucial result of Deligne \cite[Theorem 8.2]{Deligne73}.
\begin{thm}[Deligne's Theorem]\thlabel{deligne}
Let $f(z)=\sum_{n\ge1} a(n)e(nz),$ $a(1) = 1,$ be a newform of integral-weight $k$ and level $N$ with Dirichlet character $\chi$. Then for each prime $p$ not dividing $N$, we have
\[
1-a(p)p^{-s}+\chi(p)p^{k-1-2s}=(1-\alpha_p p^{-s})(1-\beta_p p^{-s}),
\]
and $|\alpha_p| =|\beta_p| =p^{\frac{k-1}{2}}$. In particular
\[
a(n)\ll_\varepsilon n^{\frac{k-1}{2}+\varepsilon},
\]
for every $\varepsilon>0$.	
\end{thm}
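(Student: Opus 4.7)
The statement bundles together three assertions: the local Euler factorization at good primes, the Ramanujan–Petersson bound $|\alpha_p|=|\beta_p|=p^{(k-1)/2}$, and the resulting Hecke-type estimate on $a(n)$. My plan is to separate these and explain how each is obtained, since only the middle one is genuinely deep.

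The Euler factorization is essentially a consequence of Hecke theory. Since $f$ is a normalized newform, it is an eigenform for every Hecke operator $T_p$ with $p\nmid N$, and the eigenvalue is exactly $a(p)$. The standard recursion
\[
a(p^{n+1})=a(p)a(p^n)-\chi(p)p^{k-1}a(p^{n-1}),\qquad n\ge 1,
\]
combined with the multiplicativity $a(mn)=a(m)a(n)$ for $(m,n)=1$, lets me expand the Dirichlet series $L(f,s)=\sum_{n\ge1}a(n)n^{-s}$ as an Euler product in which the factor at $p\nmid N$ is $(1-a(p)p^{-s}+\chi(p)p^{k-1-2s})^{-1}$. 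Writing this quadratic factor as $(1-\alpha_p p^{-s})(1-\beta_p p^{-s})$ is then just a relabelling with $\alpha_p+\beta_p=a(p)$ and $\alpha_p\beta_p=\chi(p)p^{k-1}$.

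The core of Deligne's theorem is the absolute-value assertion $|\alpha_p|=|\beta_p|=p^{(k-1)/2}$, i.e.\ the Ramanujan–Petersson conjecture for holomorphic cusp forms. The strategy, due to Deligne himself, is representation-theoretic and geometric. First, attach to $f$ a continuous $\ell$-adic Galois representation $\rho_{f,\ell}\colon\mathrm{Gal}(\overline{\mathbb Q}/\mathbb Q)\to\mathrm{GL}_2(\overline{\mathbb Q}_\ell)$ unramified outside $N\ell$ and satisfying
\[
\mathrm{tr}\,\rho_{f,\ell}(\mathrm{Frob}_p)=a(p),\qquad \det\rho_{f,\ell}(\mathrm{Frob}_p)=\chi(p)p^{k-1}.
\]
For $k=2$ this representation is cut out from $H^1_{\mathrm{\acute et}}$ of a modular curve (Eichler–Shimura); for general $k\ge 2$ one realizes it inside $H^{k-1}_{\mathrm{\acute et}}$ of the $(k-2)$-fold fibre product of the universal elliptic curve over a modular curve (the Kuga–Sato variety), twisted by the appropriate symmetric power. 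Once $\rho_{f,\ell}$ is realized in pure étale cohomology of weight $k-1$, Deligne's proof of the Weil conjectures forces every eigenvalue of $\mathrm{Frob}_p$ (for $p\nmid N\ell$) to be an algebraic number all of whose complex conjugates have absolute value $p^{(k-1)/2}$. Applied to $\alpha_p$ and $\beta_p$, this is exactly the claim.

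The pointwise bound on $a(n)$ then becomes routine: by multiplicativity and the recursion, $a(p^m)=\sum_{j=0}^m\alpha_p^{\,j}\beta_p^{\,m-j}$, so $|a(p^m)|\le(m+1)p^{m(k-1)/2}\le d(p^m)p^{m(k-1)/2}$, and multiplicativity gives $|a(n)|\le d(n)n^{(k-1)/2}$; the elementary estimate $d(n)\ll_\varepsilon n^\varepsilon$ finishes the job. The main obstacle, unsurprisingly, is the second paragraph: constructing the Galois representation and, above all, invoking Deligne's resolution of the Weil conjectures. In a paper that merely uses the statement as a black box (as here), the honest plan is to cite \cite{Deligne73} for the hard input and derive only the Euler factorization and the $a(n)$-bound directly from it.
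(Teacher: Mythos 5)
The paper offers no proof of this statement at all---it is quoted verbatim as Deligne's theorem with the citation \cite[Theorem 8.2]{Deligne73}---and your outline is the standard and correct one, deferring exactly the same hard input (the Galois representation in the cohomology of Kuga--Sato varieties plus the Weil conjectures) to the same source while filling in the routine Hecke-theoretic parts. One small caveat: your derivation of $|a(n)|\le d(n)n^{(k-1)/2}$ from $a(p^m)=\sum_{j=0}^m\alpha_p^{\,j}\beta_p^{\,m-j}$ only covers primes $p\nmid N$, so to get the stated bound for all $n$ you should also invoke the (standard, but separate) estimate $|a(p)|\le p^{(k-1)/2}$ at the ramified primes of a newform.
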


At this point we state, the following theorem, which would play a crucial role in the rest of this paper.
\begin{thm}
 
\thlabel{thm1}
Let $\W(\theta_1,\theta_2)$ be an arbitrary wedge, and let 

\begin{equation}
L(s)=\sum_{n=1}^{\infty}\frac{a(n)}{n^s},\label{eq2}
\end{equation}
be a Dirichlet series whose coefficients lie inside the wedge $\mathcal{W}(\theta_1,\theta_2)$ for all but finitely many $n\ge1$,  assume further that its abscissa of convergence $\sigma_c$ is finite, then \eqref{eq2} has a singularity at $s=\sigma_c$ (\eqref{eq2} cannot be continued analytically beyond the line $\re(s)=\sigma_c$).
\end{thm}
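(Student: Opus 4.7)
The plan is to reduce the statement to the classical Landau theorem for Dirichlet series with non-negative coefficients, via a rotation that exploits the hypothesis $\theta_2-\theta_1<\pi$. Setting $\phi := (\theta_1+\theta_2)/2$ and $\alpha := (\theta_2-\theta_1)/2 \in [0,\pi/2)$, I would work with the rotated Dirichlet series $\tilde L(s) := e^{-i\phi}L(s) = \sum_{n\ge 1}b(n)n^{-s}$, where $b(n) := e^{-i\phi}a(n)$. For all but finitely many $n$ the argument of $b(n)$ lies in $[-\alpha,\alpha]$, which produces the crucial pinching inequality
$$
\cos(\alpha)\,|b(n)| \;\le\; \re(b(n)) \;\le\; |b(n)|.
$$
A first consequence: if $\tilde L$ converges at a real point $s_0>\sigma_c$, then taking real parts yields convergence of $\sum \re(b(n))n^{-s_0}$; since these terms are non-negative beyond a finite index this is in fact absolute convergence, and the pinch then forces absolute convergence of $\tilde L$ itself. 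Hence the abscissae of convergence and of absolute convergence of $\tilde L$ (and therefore of $L$) coincide.

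Suppose for contradiction that $L$, equivalently $\tilde L$, extends holomorphically across the line $\re(s)=\sigma_c$, so that $\tilde L$ is holomorphic on an open disk $D$ centered at $\sigma_c$. Since $D$ is invariant under $s\mapsto\bar s$, the map $s\mapsto\overline{\tilde L(\bar s)}$ is also holomorphic on $D$, and so is
$$
M(s) := \frac{\tilde L(s) + \overline{\tilde L(\bar s)}}{2}.
$$
On the half-plane $\re(s)>\sigma_c$ the absolute convergence established above justifies a termwise rearrangement, yielding $M(s) = \sum_{n\ge 1}\re(b(n))\,n^{-s}$, a Dirichlet series whose coefficients are non-negative apart from finitely many indices. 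Subtracting the (entire) Dirichlet polynomial formed by those exceptional terms leaves a genuine non-negative-coefficient Dirichlet series $M_1$ sharing the same abscissa of convergence as $M$, namely $\sigma_c$ by the equivalence above. Landau's classical theorem then forces $s=\sigma_c$ to be a singularity of $M_1$, hence of $M$, contradicting the analyticity of $M$ on $D$.

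The only nontrivial step I anticipate is the identification of the two abscissae for $\tilde L$, where the strict inequality $\theta_2-\theta_1<\pi$ is essential through $\cos(\alpha)>0$. Once this is in place, the symmetrization $\tilde L(s)+\overline{\tilde L(\bar s)}$ and the invocation of Landau's theorem are routine, and the whole argument is precisely the reformulation of Fekete's sector extension of Landau's theorem advertised in the introduction. No other obstacle is expected.
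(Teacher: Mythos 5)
Your argument is correct. Your opening rotation step is exactly the reduction the paper performs in Remark 3.2: multiply by $e^{-i\phi}$ with $\phi=(\theta_1+\theta_2)/2$ so that the coefficients land in a sector symmetric about the positive real axis, giving the pinching inequality $\cos(\alpha)\lvert b(n)\rvert\le\re(b(n))\le\lvert b(n)\rvert$ with $\cos(\alpha)>0$ precisely because $\theta_2-\theta_1<\pi$. Where you diverge is that the paper stops there and simply cites Fekete's sector extension of Landau's theorem (via \cite{maurizi11}), whereas you go on to prove that cited ingredient from the classical Landau theorem: first the identification $\sigma_c=\sigma_{ab}$ via taking real parts at a real point of convergence, then the symmetrization $M(s)=\tfrac12\bigl(\tilde L(s)+\overline{\tilde L(\bar s)}\bigr)$, which is holomorphic wherever $\tilde L$ is (the disk $D$ being stable under conjugation) and which, in the half-plane of absolute convergence, is the Dirichlet series with coefficients $\re(b(n))$, eventually non-negative; discarding the finitely many exceptional terms and checking (again via the pinch) that the abscissa of convergence is still $\sigma_c$ lets Landau's theorem produce the singularity at $s=\sigma_c$, contradicting analyticity on $D$. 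All of these steps check out, including the slightly delicate point that $M$ cannot converge to the left of $\sigma_c$ (else the pinch would force absolute convergence of $\tilde L$ there). The net effect is that your write-up is self-contained modulo only Landau's classical theorem, at the cost of the extra abscissa bookkeeping, while the paper's version is a one-line reduction to an external reference; the mathematical content is the same.
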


\begin{rmk}\thlabel{rmk1}

\begin{itemize}
\item This theorem is a reformulation of \cite[Theorem 2]{maurizi11,Fekete} in terms of wedges, it provides a criterion to have infinitely many ``angular changes" (or escaping from a ``wedge") of the coefficients of the Dirichlet series. It can be derived from \cite[Theorem 2]{maurizi11} by making the following observation, since multiplication of \eqref{eq2} by $e^{i\phi}$ for any $\phi\in\mathbb{R}$, does not affect neither the hypothesis nor the conclusion, hence we may assume that $a(n)=|a(n)|e^{\phi_n}$ lies in $\mathcal{W}(\phi,-\phi)$, where $\phi=\frac{\theta_2-\theta_1}{2}\in[0,\frac{\pi}{2})$, therefore we have $\cos(\phi_n)\ge \gamma$, for all $n\in\mathbb{N}$,  where $\gamma=\cos(\phi)>0$.
\item Notice that \thref{thm1} implies that 
$$
\sigma_{\mathrm{c}}=\sigma_{\mathrm{ab}}=\sigma_{\mathrm{hol}},
$$
where $\sigma_{\mathrm{ab}}$, $\sigma_{\mathrm{c}}$, $\sigma_{\mathrm{hol}}$, denotes respectively the abscissa of absolute convergence, the abscissa of convergence, the abscissa of holomorphy of the Dirichlet series in question.

\end{itemize}
\end{rmk}

\subsection{Proof of \texorpdfstring{\thref{thm2}}{Theorem 2}}

In this subsection, we prove \thref{thm2}. In order to do this, we first define a family of operators on the space $S_k(N,\chi)$. Let $f\in S_k(N,\chi)$ be a cusp form, then it admits a Fourier expansion at $\infty$ of the form 
$$
f(z)=\sum_{n\ge1}a(n)e(nz).
$$
For each non-negative integer $j$ and a prime $p$, we define the action of the operator $T_j(p)$ on $f$ by 
\begin{equation}
T_j(p)f(z)= \sum_{n\ge1}\left(a(p^{j}n)+p^{j(k-1)}\chi^{j}(p)a\left(\frac{n}{p^{j}}\right)\right)e(nz),\label{eq,4}
\end{equation}
with the usual convention $a(n/p^{j})=0$ if $p^j$ does not divides $n$. We should note that $T_0(p)=2$ and $T_1(p)=T(p)$ where $T(p)$ is the $p$-th classical Hecke operator. We will need the following lemma.

\begin{lem}\thlabel{lem}
Let $p$ be a prime number and $j\ge1$ an integer. The following assertions hold.

\begin{enumerate}
\item $T_j(p)$ is a monic polynomial in $T(p)$ of degree $j$.
\item If $f\in S_k(N,\chi)$ is an eigenfunction of $T_j(p)$ with eigenvalue $\lambda_j(p)$, then

\begin{equation}
\sum_{n\ge0}a(p^{jn})X^n=\dfrac{a(1)}{1-\lambda_j(p)X+p^{j(k-1)}\chi^j(p)X^2}\cdot\label{eq.1}
\end{equation}
 where $a(n)$ denote the $n$-th Fourier coefficient of $f$.
\end{enumerate}
\end{lem}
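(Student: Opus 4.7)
The plan is to prove the two parts separately, with part (1) handled by an induction that rests on a three-term recursion among the operators $T_j(p)$, and part (2) handled by a direct generating-function computation.

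For part (1), I would first show that the operators $T_j(p)$ satisfy the recursion
\begin{equation*}
T(p)\, T_j(p) = T_{j+1}(p) + p^{k-1}\chi(p)\, T_{j-1}(p)\qquad (j\ge 1).
\end{equation*}
This is purely a Fourier coefficient computation from the definition \eqref{eq,4}: if $b(m) = a(p^j m) + p^{j(k-1)}\chi^j(p) a(m/p^j)$ denotes the $m$-th coefficient of $T_j(p)f$, then the $n$-th coefficient of $T(p) T_j(p) f$ is $b(pn) + p^{k-1}\chi(p) b(n/p)$; expanding and regrouping the four resulting terms yields exactly the $n$-th coefficient of $T_{j+1}(p)f + p^{k-1}\chi(p) T_{j-1}(p) f$. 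Given the base cases $T_0(p) = 2$ and $T_1(p) = T(p)$, a straightforward induction on $j$ shows that $T_j(p)$ is a polynomial in $T(p)$ with leading term $T(p)^j$.

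For part (2), I would exploit the eigenrelation $T_j(p)f = \lambda_j(p) f$ by equating Fourier coefficients at the index $p^{jn}$. For $n\ge 1$, the $p^{jn}$-th Fourier coefficient of $T_j(p) f$ is
\begin{equation*}
a(p^{j(n+1)}) + p^{j(k-1)}\chi^j(p)\, a(p^{j(n-1)}),
\end{equation*}
so the eigenrelation gives the linear recurrence
\begin{equation*}
a(p^{j(n+1)}) = \lambda_j(p)\, a(p^{jn}) - p^{j(k-1)}\chi^j(p)\, a(p^{j(n-1)}),\qquad n\ge 1.
\end{equation*}
The boundary identity $a(p^j) = \lambda_j(p)\, a(1)$ comes from reading the coefficient at $n=1$ of $T_j(p) f = \lambda_j(p) f$ (the term $a(1/p^j)$ vanishes since $j\ge 1$). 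Multiplying the generating series $G(X) = \sum_{n\ge 0} a(p^{jn}) X^n$ by $1 - \lambda_j(p)X + p^{j(k-1)}\chi^j(p) X^2$ and using these two identities collapses all coefficients for $n\ge 1$ to zero, leaving $a(1)$ as the constant term. Solving for $G(X)$ produces \eqref{eq.1}.

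The main obstacle is really just the bookkeeping in part (1): one must be careful that the convolution $b(pn) + p^{k-1}\chi(p) b(n/p)$ produces the correct character and $p$-power factors to match $T_{j\pm 1}(p)$ on the nose, since the exponents $j(k-1)$, $(j\pm 1)(k-1)$, and $k-1$ must combine consistently (and likewise for $\chi^j$, $\chi^{j\pm 1}$, $\chi$). Once this identity is set up correctly, the rest of the argument is formal, and part (2) follows without any further input.
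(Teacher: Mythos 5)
Your proposal is correct and follows essentially the same route as the paper: part (1) via the three-term recursion $T_{j+1}(p)=T_j(p)T(p)-p^{k-1}\chi(p)T_{j-1}(p)$ and induction on $j$, and part (2) via the linear recurrence $a(p^{j(n+1)})=\lambda_j(p)a(p^{jn})-p^{j(k-1)}\chi^j(p)a(p^{j(n-1)})$ extracted from the eigenrelation. You merely spell out the coefficient bookkeeping and the boundary identity $a(p^j)=\lambda_j(p)a(1)$ that the paper leaves implicit.
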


\begin{proof}[Proof of \thref{lem}]
\begin{enumerate}
\item  We see easily from \eqref{eq,4} that for all $j\ge 1$ one has
$$
T_{j+1}(p)=T_j(p)T(p)-p^{k-1}\chi(p)T_{j-1}(p),
$$
hence the result follows by recurrence on $j$.
\item Let $n\in\N$. To prove \eqref{eq.1}, it suffices to show that
$$
a(p^{j(n+1)})=\lambda_j(p)a(p^{jn})-p^{j(k-1)}\chi^j(p)a(p^{j(n-1)}),
$$
for all $j\ge1$, which can be deduced from \eqref{eq,4}.
\end{enumerate}
\end{proof}
Now we are in position to prove \thref{thm2}, we shall follow closely the method of Kohnen and Martin in \cite[Proof of Theorem 2.1]{kohnen14}. 

Let $p$ be a prime, $ p\nmid N$,  for which $a(p^{jn})$ lies in the wedge $\mathcal{W}(\theta_1,\theta_2)$ for all but finitely many $n\ge 0$. Then the Dirichlet series 

\begin{equation}
\sum_{n\ge0}a(p^{jn})p^{-jns}\quad (\re(s)\gg 1),\label{eq:2}
\end{equation}
satisfies the hypothesis of \thref{thm1}. Hence two situations can occur, either (a) the series  has a pole on the real point of its line of convergence or (b) it converges for all $s\in\C$. We will disprove the assertion (a) for all but a finite number of primes $p$ and disprove (b) for all $p$. We start by considering the first case (a).

Let $\zeta:=e^{2\pi i/j}$ be a primitive $j$-th root of unity and let $\mu\in\mathbb{Z}$. A similar argument to that in \cite[Proof of Theorem 2.1]{kohnen14} yields 
\begin{equation}
\sum_{n\ge0}a(p^{jn})p^{-jns}=\frac{1}{j}\sum_{\mu=0}^{j-1}\dfrac{1}{(1-\zeta^{\mu}\alpha_p p^{-s})(1-\zeta^{\mu}\beta_p p^{-s})}\quad (\re(s)\gg1)\cdot\label{eq:5}
\end{equation}
By our hypothesis, one of the denominators on the right-hand side of \eqref{eq:5}
has a real zero. In this case necessarily at least one of the numbers $\alpha_p\zeta^{\mu}$ or $\beta_p\zeta^{\mu}$ is
real. Suppose that $\alpha_p\zeta^{\mu}=\nu\in\R$. Then $\overline{\alpha_p}\zeta^{-\mu}=\nu$, and by \thref{deligne} we have $\nu^2=|\alpha_p|^2= p^{k-1}$. Hence $\nu=\pm p^{(k-1)/2}$. It follows that
$$
a(p)=\alpha_p+\beta_p=\pm p^{(k-1)/2}(\zeta^{-\mu}+\chi(p)\zeta^{\mu}).
$$
We get the same result if we start with the condition that $\beta_p\zeta^\mu$ is real.

Suppose, for the sake of contradiction there are infinitely many primes $p$ for which there are integers $\mu_p\pmod j$
satisfying

\begin{equation}
a(p)=\pm p^{(k-1)/2}\upsilon_{\mu_p},\label{eq:7}
\end{equation}
where $\upsilon_{\mu_p}=\zeta^{-\mu_p}+\chi(p)\zeta^{\mu_p}$. We should note that 
\begin{equation}
\upsilon_{\mu_p}\neq 0,\label{eq.7}
\end{equation}
for all $\mu_p\in\{0,\cdots,j-1\}$, which is guaranteed by the assumptions $2\nmid j$, and $r_{\chi}$ is odd. Consider now 
$$
K_f:=\Q(\{a(p)\}_p),
$$
the subfield of $\C$ generated by all $a(p)$ ($p$ runs on primes). It is a well known fact that $K_f$ is a number field. It follows that $K_f(\zeta)$ is also a finite extension of $\Q$. Altogether from \eqref{eq:7} and \eqref{eq.7} we see

\begin{equation}
\sqrt{p}\in K_f(\zeta).\label{eq,7}
\end{equation}
By our hypothesis we infer that there exists an infinite sequence of primes $p_1<p_2<p_3\dots$  satisfying \eqref{eq,7}. Consequently
$$
\Q(\sqrt{p_1},\sqrt{p_2},\sqrt{p_3},\ldots)\subset K_f(\zeta).
$$

However, it is classical that the degree of the extension 
$$
\Q(\sqrt{p_1},\sqrt{p_2},\sqrt{p_3},\dots)/\Q
$$
is infinite, which give our contradiction. We have thus proved that for almost all primes $p$ the right-hand side of \eqref{eq:5} has no real poles.  

It remains to exclude the case (b) when \eqref{eq:2} converges everywhere. From \thref{thm1}, we see that for primes  $p$  not satisfying (a), the series \eqref{eq:2} converges everywhere, and particularly, it is an entire function in $s$. By (1) of \thref{lem} we see that $f$ is an eigenfunction of $T_j(p)$. Let $\lambda_j(p)$ be the corresponding eigenvalue, hence from (2) of \thref{lem} we get 
$$
\sum_{n\ge 0}a(p^{jn})X^{jn}=\frac{1}{1-\lambda_j(p)X^j+p^{j(k-1)}\chi^j(p)X^{2j}}\cdot
$$
The denominator on the right-hand side is a polynomial in $X^j$ of degree $2$, hence
it is non-constant and so has zeros. Setting $X=p^{-s}$, we obtain a contradiction.

\subsection{Proof of \texorpdfstring{\thref{thm3}}{Theorem 3}}

Let $\mathrm{Sh}_t(f)$ be the modular form associated to $f$ under the Shimura correspondence. According to \cite{shimura73,niwa75}, we have $\mathrm{Sh}_t(f)\in S_{2k}(N/2,\chi^2)$ and the $n$-th Fourier coefficient of $\mathrm{Sh}_t(f)$ is given by 

\begin{equation}
A_t(n)=\sum_{d|n}\chi_{t,N}(d)d^{k-1}a\left(\frac{n^2}{d^2}t\right),\label{eq:8}
\end{equation}
where $\chi_{t,N}$ denotes the character $\chi_{t,N}(d):=\chi(d)\left(\frac{(-1)^{k}N^{2}t}{d}\right)$. Furthermore, \eqref{eq:8} is equivalent to 

\begin{equation}
\sum_{n\ge1}\frac{a(tn^2)}{n^s}=\dfrac{1}{L(s-k+1,\chi_{t,N})}L(s,\mathrm{Sh}_t(f)),\label{eq:9}
\end{equation}
where $L(s,\chi_{t,N})$ is the Dirichlet $L$-function associated to $\chi_{t,N}$, and $L(s,\mathrm{Sh}_t(f))$ is the Hecke $L$-function associated to the cusp form $\mathrm{Sh}_t(f)$.
Notice that $\mathrm{Sh}_t(f)\neq 0$, which is guaranteed by the assumption $a(tn_0^2)\neq 0$.

For the sake of contradiction we assume that $a(tn^2)$ lies in $\W(\theta_1,\theta_2)$ for all but finitely many $n\ge 1$. Then by \thref{thm1}, we infer that the series in the left-hand side of \eqref{eq:9} either has a singularity at the real point of its line of convergence or converge everywhere. Further, by \thref{rmk1} we obtain 

\begin{equation}
\sigma_{\mathrm{c}}=\sigma_{\mathrm{ab}}=\sigma_{\mathrm{hol}},\label{eq:10}
\end{equation}
where $\sigma_{\mathrm{ab}}$, $\sigma_{\mathrm{c}}$, $\sigma_{\mathrm{hol}}$, denotes respectively the abscissa of absolute convergence, the abscissa of convergence and the abscissa of holomorphy of the series $\sum a(tn^2)n^{-s}$.  

Since $L(1,\chi_{t,N})\neq 0$, the function $L(s-k+1,\chi_{t,N})^{-1}$ is holomorphic in some neighborhood of $s=k$. Since $L(s,\mathrm{Sh}_t(f))$ is entire, we deduce that the series in the left hand side of \eqref{eq:9} is holomorphic in a region contained in the half-plane  $\re(s)<k$, hence by \eqref{eq:10} we obtain $\sigma_{\mathrm{ab}}< k$.

On the other hand, the series $\sum_{n\ge1}|a(tn^2)|$ diverge, since otherwise the series $\sum_{n\ge1}|a(tn^2)|n^{-s}$ 
converge for $\re(s)>0$, consequently the Dirichlet series associated to $L(s,\mathrm{Sh}_t(f))$ converge absolutely for $\re(s)>k$, which contradict the fact that its abscissa of absolute convergence is $k+1/2$ (see \cite[Lemma]{kohnen10}).

It follows by a classical fact about Dirichlet series that the abscissa of absolute convergence of the left-hand side of \eqref{eq:9} is given by 
$$
\sigma_{\mathrm{ab}}=\inf\left\{\sigma\in\R\; : \; \sum_{n\le N}|a(tn^2)|=O_{\sigma}( N^{\sigma})\right\}.
$$
Therefore, there exists $\varepsilon>0$ for which
$$
\sum_{n\le N}|a(tn^2)|=O_{\varepsilon}( N^{k-\varepsilon}).
$$
Now arguing as around the end of the proof of \cite[Theorem]{kohnen10} we get a contradiction with the fact that the Dirichlet series associated to $L(s,\mathrm{Sh}_t(f))$ has $k+1/2$ as the abscissa of convergence.

\subsection{Proof of \texorpdfstring{\thref{thm4}}{Theorem 4}}

By way of contradiction, suppose there are infinitely many primes $p\nmid N$ such that  $a(tp^{2\nu})$ lies in the wedge $\W(\theta_1,\theta_2)$ for all but finitely many $\nu\in\N$. So, by \thref{thm1} the series 

$$
\sum_{\nu\ge0}a(tp^{2\nu})p^{-\nu s},
$$
either (a) converge for all $s\in\C$ or (b) has a singularity at the real point of its line of convergence. 

Let $\mathrm{Sh}_t(f)$ the Shimura lift of $f$ with respect to $t$. Let $\lambda_p$ denote the $p$-th Hecke eigenvalue of $f$. Since 
$$
T(p)\mathrm{Sh}_t(f)=\mathrm{Sh}_t(T(p^2)f),
$$
it follows that the $p$-th Hecke eigenvalue of $\mathrm{Sh}_t(f)$ is $\lambda_p$, where $T(p^2)$ is the Hecke operator on $S_{k+1/2}(N,\chi)$ and $T(p)$ is the Hecke operator
on $S_{2k}(N/2,\chi^2)$. By \cite[Corolary 1.8]{shimura73} we have 

\begin{equation}
\sum_{\nu\ge0}a(tp^{2\nu})p^{-\nu s}=a(t)\frac{1-\chi_{t,N}(p)p^{k-1-s}}{1-\lambda_p p^{-s}+\chi^2(p)p^{2k-1-2s}},\label{eq:12}
\end{equation}
where $\chi_{t,N}:=\chi(.)\left(\frac{(-1)^{k}N^{2}t}{.}\right)$. The denominator of the right hand side of \eqref{eq:12} factorizes as follows 

\begin{equation*}
1-\lambda_p p^{-s}+\chi^2(p)p^{2k-1-2s}=(1-\alpha_p p^{-s})(1-\beta_p p^{-s}),
\end{equation*}
where $\alpha_p+\beta_p=\lambda_p,$ and $\alpha_p\beta_p=\chi^2(p)p^{2k-1}$. By \thref{deligne} we have

\begin{equation}
|\alpha_p|=p^{k-1/2},\quad|\beta_p|=p^{k-1/2}\cdot\label{eq:13}
\end{equation}
It is clear that the alternative (a) cannot occur, since the right-hand side of \eqref{eq:12} has a pole for $p^{s}=\alpha_p$ or $p^{s}=\beta_p.$
Thus the alternative (b) must hold, therefore $\alpha_p$ or $\beta_p$ must be real. Suppose that $\alpha_p\in\R$. By \eqref{eq:13} we have

\begin{equation}
\lambda_p=\alpha_p+\beta_p=\pm p^{k-1/2}(1+\chi^2(p)).\label{eq:14}
\end{equation}
Since $r_{\chi^2}$ is odd we have $1+\chi^2(p)\neq 0$. Hence $\sqrt{p}$ is contained in the number field $K_f$. Now we can derive a contradiction by arguing as around the end of the proof of \thref{thm2}. Therefore, the assumption that there are infinitely many primes for which the sequence $a(tp^{2\nu})$ lies in the wedge $\W(\theta_1,\theta_2)$, for all but finitely many $\nu\in\N$, must be false.

\section*{Acknowledgements}
The author would like to thank  the referee for his careful reading as well as Winfried Kohnen and Thomas A Hulse for useful discussions concerning their work.
\bibliographystyle{plain}
\bibliography{mybibfile}
\Addresses
\end{document}